\documentclass[12pt]{amsart}
\usepackage{amssymb,mathrsfs,color,bbold}

\theoremstyle{plain}
\newtheorem{theorem}{Theorem}[section]
\newtheorem{lemma}[theorem]{Lemma}
\newtheorem{proposition}[theorem]{Proposition}

\theoremstyle{definition}

\newcommand{\term}[1]{{\textit{\textbf{#1}}}}

\newcommand{\abs}[1]{\lvert#1\rvert}
\newcommand{\norm}[1]{\lVert#1\rVert}
\newcommand{\bigabs}[1]{\bigl\lvert#1\bigr\rvert}
\newcommand{\bignorm}[1]{\bigl\lVert#1\bigr\rVert}

\title[Almost L-weakly compact operators]{Some results on almost L-weakly and almost M-weakly compact operators}

\date{\today}

\keywords{semi-compact operator, almost L-weakly compact operator, almost M-weakly compact operator.}
\subjclass[2010]{46A40, 46B42}

\author[H. Li]{Hui Li}
\address{School of Mathematics, Southwest Jiaotong University, Chengdu, Sichuan, China, 610000.}
\email{lihuiqc@my.swjtu.edu.cn}

\author[Z. Chen]{Zili Chen}
\address{School of Mathematics, Southwest Jiaotong University, Chengdu, Sichuan, China, 610000.}
\email{zlchen@home.swjtu.edu.cn}

\begin{document}

\begin{abstract}
In this paper, we present some necessary and sufficient conditions for semi-compact operators being 
almost L-weakly compact (resp. almost M-weakly compact) and the converse. Mainly, we  prove that 
 if $X$ is a nonzero Banach space, then every semi-compact operator $T: X\rightarrow E$ is almost L-weakly 
 compact if and only if the norm of $E$ is order continuous. And every positive semi-compact 
 operator $T:E\rightarrow F$  is almost M-weakly compact 
if and only if the norm of $E'$ is order continuous. Moreover, we investigate the relationships
between almost L-weakly compact operators and Dunford-Pettis (resp. almost Dunford-Pettis) operators.
\end{abstract}

\maketitle

\section{introduction}

Throughout this paper, $X$ and $Y$ will denote real Banach spaces, $E$ and $F$ will 
denote real Banach lattices. $B_{X}$ (resp. $B_{E}$) is the closed unit of Banach 
space $X$ (resp. Banach lattice $E$) and $Sol(A)$ denotes the solid hull of a subset $A$ of a Banach lattice. 

Recall that a continuous operator $T:X\rightarrow E$ from a Banach space to a Banach lattice is 
said \term{semi-compact} if and only if for each $\varepsilon>0$ there exists some $u\in E_{+}$ such 
that $T(B_{X})\subset [-u, u]+\varepsilon B_{E}$.  In recent years, K. Bouras et al. \cite{BLM:18} introduced two classes 
of operators of almost L-weakly and almost M-weakly compact. Recall that an operator $T$ from a 
Banach space $X$ into a Banach lattice $F$ is called \term{almost L-weakly compact} if  $T$ 
carries relatively weakly compact subsets of $X$ onto L-weakly compact subsets of $F$. An 
operator $T$ from a Banach lattice $E$ into a Banach space $Y$ is called \term{almost M-weakly 
compact}  if for every disjoint sequence $(x_{n})$ in $B_{E}$ and every weakly convergent sequence $(f_{n})$ of $Y'$, we have $f_{n}(T(x_{n}))\rightarrow 0$. 

They proved in \cite{BLM:18} that an operator $T$ from a Banach space $X$ into a Banach
 lattice $F$  is almost L-weakly compact if and only if $f_{n}(T(x_{n}))\rightarrow 0$ for every 
 weakly convergent sequence $(x_{n})$ of $X$ and every disjoint sequence $(f_{n})$ 
 of $B_{F'}$ (\cite[Theorem 2.2]{BLM:18}). After that, A. Elbour et al. \cite{EAS:19} gave 
 a useful characterization of almost L-weakly compact operator. 
 % Recall that 
%$$E^{a}=\{x\in E: every \ monotone\ sequence\ in\ [0,\abs{x}]\ is\ convergent \}.$$ 
%$E^{a}$ is the maximal ideal in $E$ on which the induced norm is order continuous and 
%is closed (\cite[Proposition 2.4.10] {MN:91}). 
An operator  $T$ from a Banach space $X$ into a Banach lattice $F$  is almost L-weakly 
compact if and only if $T(X)\subset F^{a}$ and 
$f_{n}(T(x_{n}))\rightarrow 0$ for every weakly null sequence $(x_{n})$ of $X$ and 
every disjoint sequence $(f_{n})$ of $B_{F'}$ (\cite[Proposition 1]{EAS:19}). 

Recall that  a norm $\norm{\cdot}$ of a  Banach lattice $E$ is \term{order continuous} if
 for each net $(x_{\alpha})$ in $E$ with $x_{\alpha}\downarrow 0$, one has $\norm{x_{\alpha}}\downarrow 0$.  
It is easy to see that if $E$ has an order continuous norm, $E=E^{a}$. A  Banach lattice is said to have \term{weakly sequentially continuous lattice operations} whenever $x_{n}\xrightarrow{w} 0$ implies $\abs{x_{n}}\xrightarrow{w} 0$. 
Every $AM$-space has this property.  A  Banach space is said to have the \term{ Schur property} whenever  every  weakly null sequence 
is norm null, i.e., whenever $x_{n}\xrightarrow{w}0$ implies $\norm{x_{n}}\rightarrow 0$. A  Banach 
space is said to have the \term{ positive Schur property} whenever  every  disjoint weakly null 
sequence is norm null. In \cite{BLM:18}, it was proved  that the identity operator $Id_{E}$ is 
almost L-weakly compact if and only if $E$ has the positive Schur property (\cite[Proposition 2.2]{BLM:18}).
 And the identity operator $Id_{E}$ is almost M-weakly compact if and only if $E'$ has the positive
  Schur property (\cite[Corollary 2.1]{BLM:18}).

 Following from these conclusions, it is easy to see that there exist operators which are 
 semi-compact but not almost L-weakly compact or almost M-weakly compact.  And
  there also exist operators which are almost L-weakly compact (resp. almost M-weakly compact) but 
  not semi-compact.
 
  In this paper, we establish some necessary and sufficient conditions for semi-compact operator being 
  almost L-weakly compact (resp. almost M-weakly compact) and the converse. More precisely, 
  we  prove that every semi-compact operator $T$ from a nonzero Banach space $X$ to a Banach lattice $E$ is almost L-weakly compact if and only if the norm of $E$ is order continuous (Theorem \ref{sc-al}). 
   And every positive semi-compact operator from a  Banach lattice $E$ into a nonzero Banach lattice $F$ is 
   almost M-weakly compact if and only if the norm of $E'$ is order continuous (Theorem \ref{sc-am}). 
   We also investigate the conditions under which each almost L-weakly compact operator is 
   semi-compact (Theorems \ref{al-sc}, \ref{al-sc1}). Moreover, we show each positive almost
    Dunford-Pettis operator $T:E\rightarrow F$ is almost L-weakly compact  if and only if $F$ has 
    an order continuous norm (Proposition \ref{adp-alwc}).
  
 All operators in this paper are assumed to be continuous. We refer to ~\cite{AB:06,MN:91} 
 for all unexplained terminology and standard facts on vector and Banach lattices. All vector
  lattices in this paper are
assumed to be Archimedean.

\section{main results}
%semi-compact; almost L-weak compact

There exist operators which are semi-compact but not almost L-weakly compact. For example,
the identity operator $Id_{c}:c\rightarrow c$ is semi-compact since $c$ is an AM-space with unit.  
But it is not almost L-weakly compact since $c$ doesn't have the positive Schur property.

The following Theorem gives a necessary and sufficient condition under which every semi-compact
 operator is almost L-weakly compact.
\begin{theorem}\label{sc-al}
Let $X$ be a nonzero Banach space and $E$ be a Banach lattice.  Then the following statements are equivalent:
\begin{itemize}
\item[(1)] Every semi-compact operator $T:X\rightarrow E$ is almost L-weakly compact;
\item[(2)] The norm of $E$ is order continuous.
\end{itemize}
\end{theorem}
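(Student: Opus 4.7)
The plan is to prove the two implications separately, with both directions leaning on the characterization from \cite{EAS:19} that an operator into a Banach lattice is almost L-weakly compact if and only if its range is contained in $F^{a}$ together with the disjoint-functional condition on images of weakly null sequences.

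For $(2)\Rightarrow(1)$, I would start from the observation that a semi-compact operator $T:X\rightarrow E$ is, by definition, one for which $T(B_{X})$ is almost order bounded in $E$. The main technical step is to show that any almost order bounded subset of a Banach lattice with order continuous norm is L-weakly compact. Given $\varepsilon>0$, pick $u\in E_{+}$ with $T(B_{X})\subset [-u,u]+\varepsilon B_{E}$ and consider a disjoint sequence $(y_{n})$ in $Sol(T(B_{X}))$. A brief calculation yields $\abs{y_{n}}\leq u+w_{n}$ with $\norm{w_{n}}<\varepsilon$; the decomposition $\abs{y_{n}}=(\abs{y_{n}}\wedge u)+(\abs{y_{n}}-u)^{+}$ then has a remainder of norm at most $\varepsilon$, while the first term is a disjoint sequence in $[0,u]$ whose norm tends to $0$ by order continuity (its partial sums are dominated by $u$). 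Hence $\limsup\norm{y_{n}}\leq 2\varepsilon$ and, $\varepsilon$ being arbitrary, $T(B_{X})$ is L-weakly compact. Since any relatively weakly compact subset of $X$ is norm bounded and therefore maps into a scalar multiple of the L-weakly compact set $T(B_{X})$, the operator $T$ is almost L-weakly compact.

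For $(1)\Rightarrow(2)$, I would argue by contrapositive. Suppose the norm of $E$ is not order continuous, so $E^{a}\neq E$; because $E^{a}$ is an ideal, one can select $v\in E_{+}\setminus E^{a}$. Fix a nonzero $x_{0}^{*}\in X'$, which exists since $X\neq\{0\}$, and define the rank-one operator $T:X\rightarrow E$ by $T(x)=x_{0}^{*}(x)\,v$. Then $T(B_{X})\subset[-\norm{x_{0}^{*}}v,\,\norm{x_{0}^{*}}v]$, so $T$ is semi-compact (indeed compact). However $v\in T(X)$ and $v\notin E^{a}$, so $T(X)\not\subset E^{a}$, and \cite[Proposition 1]{EAS:19} then forbids $T$ from being almost L-weakly compact, contradicting (1).

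The main obstacle I anticipate is the lemma that almost order boundedness upgrades to L-weak compactness under order continuity of the norm, and in particular the control of the disjoint sequence $(\abs{y_{n}}\wedge u)$ in $[0,u]$ via partial-sum domination by $u$; once this step is in place, both the implication $(2)\Rightarrow(1)$ and the rank-one construction for the converse follow almost immediately.
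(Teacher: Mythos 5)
Your proposal is correct and follows essentially the same route as the paper: the forward direction rests on the fact that semi-compactness plus order continuity of the norm of $E$ yields L-weak compactness of $T(B_{X})$ (which the paper simply cites as Corollary 3.6.14 of Meyer-Nieberg and you prove inline via the decomposition $\abs{y_{n}}=(\abs{y_{n}}\wedge u)+(\abs{y_{n}}-u)^{+}$), and the converse uses the same rank-one operator $x\mapsto x_{0}^{*}(x)\,v$ built on a positive vector witnessing the failure of order continuity. The only cosmetic difference is that you detect the failure of almost L-weak compactness through the range condition $T(X)\not\subset E^{a}$ of \cite[Proposition 1]{EAS:19}, whereas the paper exhibits a disjoint, order bounded, non-norm-null sequence in $Sol(\{y\})$ directly; these are equivalent formulations of $v\notin E^{a}$.
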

\begin{proof}
$(2)\Rightarrow (1)$ If the norm of $E$ is order continuous, then by Corollary 3.6.14 of \cite{MN:91}, semi-compact
 operator $T:X\rightarrow E$ is L-weakly compact. It is obvious that every L-weakly compact operator is almost 
 L-weakly compact. Hence $T$ is almost L-weakly compact.

$(1)\Rightarrow (2)$ Assume by way of contradiction that the norm of $E$ is not order continuous, we 
need to construct an operator which is semi-compact but not almost L-weakly compact.

Since the norm of $E$ is not order continuous, by Theorem 4.14 of \cite{AB:06}, there exists a 
vector $y\in E_{+}$ and a disjoint sequence $(y_{n})\subset [-y,y]$ such that $\norm{y_{n}}\nrightarrow 0$.  
On the other hand, as $X$ is nonzero,  we may fix $u\in X$ and pick a $\phi\in X'$ such 
that $\phi(u)=\norm{u}=1$ holds.

Now, we consider operator $T:X\rightarrow E$ defined by
$$T(x)=\phi(x)\cdot y$$
for each $x\in X$. Obviously, $T$ is semi-compact as it is compact (its rank is one). 
But it is not an almost L-weakly compact operator. If not, 
as the singleton $\{u\}$ is  a weakly compact subset of $X$, and $T(u)=\phi(u)\cdot y=y$, 
the singleton $\{y\}$  is an L-weakly compact subset of $E$.  Since disjoint 
sequence $(y_{n})\subset sol(\{y\})$, we have $\norm{y_{n}}\rightarrow 0$, which is a contradiction.
\end{proof}

%semi-compact; almost M-weak compact
There exist operators which are semi-compact  but not almost M-weakly compact. For example, 
the operator $T:\ell_{1}\rightarrow \ell_{\infty}$ defined by
$$T(\lambda_{n})=(\sum_{n=1}^{\infty}{\lambda_{n}})\cdot e$$
for each $(\lambda_{n}) \in \ell_{1}$, where $e=(1, 1,...)$ is the constant sequence with value 1 \cite[p. 322 ]{AB:06}. Obviously, 
$T$ is semi-compact as it is compact (its rank is one). But based on 
the argument in \cite[p. 3]{EAS:19}, we know that $T$ is not an almost M-weakly compact operator. 

The following Theorem gives a necessary and sufficient condition under which every
 semi-compact operator is almost M-weakly compact.

\begin{theorem}\label{sc-am}
Let $E$ and $F$ be two nonzero Banach lattices. Then the following 
statements are equivalent:
\begin{itemize}
\item[(1)] Every positive semi-compact operator $T:E\rightarrow F$ is almost M-weakly compact;
\item[(2)] The norm of $E'$ is order continuous.
\end{itemize}
\end{theorem}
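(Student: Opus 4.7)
The plan follows the structure of the proof of Theorem~\ref{sc-al}, with the roles of $E$ and $E'$ (and of L-weak versus M-weak compactness) dualized throughout.

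For $(1)\Rightarrow(2)$ I reason by contrapositive. If the norm of $E'$ is not order continuous, then by \cite[Theorem 4.34]{AB:06} (which characterizes order continuity of $E'$ by the property that every norm bounded disjoint sequence in $E$ is weakly null), there exists a disjoint $(x_n)\subset B_E$ with $x_n\not\xrightarrow{w} 0$; along a subsequence pick $\varphi\in E'$ with $\abs{\varphi(x_n)}\ge c>0$. Splitting $\varphi=\varphi^+-\varphi^-$ and $x_n=x_n^+-x_n^-$ and applying pigeonhole to the four cross-terms $\varphi^{\pm}(x_n^{\pm})$ yields $\psi\in E'_+$ together with a positive disjoint subsequence $(\tilde x_n)\subset B_E\cap E_+$ satisfying $\psi(\tilde x_n)\ge c/4$. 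Since $F$ is nonzero, fix $u\in F_+$ with $\norm{u}=1$ and, by taking the positive part of a Hahn--Banach norming functional at $u$, choose $g\in F'_+$ with $g(u)=1$. Set $T(x):=\psi(x)\cdot u$; this $T$ is positive and has rank one (hence compact, hence semi-compact). Testing against $(\tilde x_n)$ and the constant, trivially weakly convergent sequence $f_n\equiv g$ yields $f_n(T(\tilde x_n))=\psi(\tilde x_n)\cdot g(u)\ge c/4$, so $T$ is not almost M-weakly compact, contradicting~(1).

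For $(2)\Rightarrow(1)$ I prove the stronger statement that under $(2)$ every positive semi-compact $T:E\to F$ is in fact M-weakly compact. This delivers almost M-weak compactness for free, since any weakly convergent $(f_n)\subset F'$ is norm bounded and hence $\abs{f_n(T(x_n))}\le\norm{f_n}\,\norm{T(x_n)}\to 0$ for every disjoint $(x_n)\subset B_E$. To establish M-weak compactness, let $(x_n)\subset B_E$ be disjoint; splitting into positive and negative parts (each itself disjoint in $B_E$) reduces to $x_n\ge 0$. Order continuity of $E'$ forces $x_n\xrightarrow{w}0$, whence $T(x_n)\xrightarrow{w}0$ in $F$. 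Semi-compactness decomposes $T(x_n)=(T(x_n)\wedge u)+(T(x_n)-u)^+$ with $u\in F_+$ and $\norm{(T(x_n)-u)^+}\le\varepsilon$, and the squeeze $0\le T(x_n)\wedge u\le T(x_n)$ (applied first to positive test functionals and then extended by splitting $f=f^+-f^-$) gives $T(x_n)\wedge u\xrightarrow{w}0$ in $[0,u]$.

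The genuine obstacle is the final step: upgrading $a_n:=T(x_n)\wedge u\xrightarrow{w}0$ in $[0,u]$ to $\norm{a_n}\to 0$, since in a general Banach lattice $F$ a positive weakly null sequence in an order interval need not be norm null (witness $\chi_{[0,1/n]}$ in $L^\infty$). The remedy parallel to the appeal to \cite[Corollary 3.6.14]{MN:91} in Theorem~\ref{sc-al} is to pass to the adjoint: using $E'=(E')^a$ together with positivity and semi-compactness of $T$, one argues that $T^*:F'\to E'$ is L-weakly compact, and invokes the classical duality that $T$ is M-weakly compact iff $T^*$ is L-weakly compact (\cite[Proposition 3.6.17]{MN:91}). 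The technical heart of the argument is thus the adjoint transfer of almost order boundedness of $T(B_E)\subset F$ to almost order boundedness of $T^*(B_{F'})\subset E'$ under the standing hypothesis that $E'$ has order continuous norm.
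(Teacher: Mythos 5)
Your $(1)\Rightarrow(2)$ direction is sound, and it is in fact more direct than the paper's: the paper also uses a rank-one operator $T(x)=\phi(x)\cdot y$, but it takes $\phi$ from a disjoint order-bounded non-norm-null sequence in $E'$ and detects the failure of almost M-weak compactness through the adjoint via \cite[Theorem 2.5(1)]{BLM:18}, whereas you detect it straight from the definition by testing against the constant sequence $f_n\equiv g$ and a positive disjoint sequence in $B_E$. (A minor simplification: the four-way pigeonhole is unnecessary, since $\psi=\abs{\varphi}$ and $\tilde x_n=\abs{x_n}$ already give $\psi(\tilde x_n)\ge\abs{\varphi(x_n)}\ge c$.)

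The $(2)\Rightarrow(1)$ direction, however, has a fatal gap: the strengthened statement you set out to prove is false. Take $E=F=C[0,1]$. Then $E'=M[0,1]$ is an AL-space, so its norm is order continuous, and the identity is positive and semi-compact because $B_{C[0,1]}=[-\one,\one]$ is order bounded. Yet the identity is not M-weakly compact: disjointly supported norm-one bump functions form a bounded disjoint sequence that is not norm null. So ``positive semi-compact $+$ $E'$ order continuous'' yields only \emph{almost} M-weak compactness, never M-weak compactness in general; the step you yourself flag as the obstacle (upgrading $Tx_n\wedge u\xrightarrow{w}0$ in $[0,u]$ to norm convergence) is exactly where this example kills the argument. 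The proposed remedy fails for the same example: $T'=Id_{M[0,1]}$ is not L-weakly compact (the point masses $\delta_{t_n}$ are a disjoint normalized sequence in the solid set $T'(B_{F'})=B_{M[0,1]}$), and semi-compactness of $T$ does not transfer to almost order boundedness of $T'(B_{F'})$. The paper's route avoids this entirely: $T$ positive semi-compact implies $T'$ is almost Dunford--Pettis (\cite[Corollary 3.3]{AE:11}); an almost Dunford--Pettis operator into $E'$ with order continuous norm is almost L-weakly compact (\cite[Proposition 2.4]{BLM:18}); and $T'$ almost L-weakly compact is equivalent to $T$ almost M-weakly compact (\cite[Theorem 2.5(1)]{BLM:18}). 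Some such weakening --- duality with \emph{almost} L-weak compactness of $T'$ rather than L-weak compactness --- is unavoidable here.
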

\begin{proof}
$(2)\Rightarrow (1)$  Since positive operator $T:E\rightarrow F$ is semi-compact, following from 
Corollary 3.3 of \cite{AE:11}, $T': F'\rightarrow E'$ is an almost Dunford-Pettis operator. 
As the norm of $E'$ is order continuous, by Proposition 2.4 of \cite{BLM:18}, $T'$ is an 
almost L-weakly compact operator. Following from Theorem 2.5(1) of \cite{BLM:18}, $T$ 
is almost M-weakly compact.

$(1)\Rightarrow (2)$ Assume by way of contradiction that the norm of $E'$ is not order 
continuous, we need to construct a positive operator which is semi-compact but not almost M-weakly compact.

Since the norm of $E'$ is not order continuous, by Theorem 4.14 of \cite{AB:06}, there
 exists a vector $\phi\in E'_{+}$ and a disjoint sequence $(\phi_{n})\subset [-\phi,\phi]$ 
 such that $\norm{\phi_{n}}\nrightarrow 0$.  On the other hand, as $F$ is nonzero,  we 
 may fix $y \in F_{+}$ and pick a vector $g\in (F')_{+}$ such that $g(y)=\norm{y}=1$ holds.

Now, we consider operator $T:E\rightarrow F$ defined by
$$T(x)=\phi(x)\cdot y.$$
for each $x\in E$. Obviously, $T$ is positive and semi-compact operator as it is compact (its rank is one). 
But it is not an almost M-weakly compact operator. In fact, by 
Theorem 2.5(1) of \cite{BLM:18}, we only need to show that its adjoint $T: F'\rightarrow E'$ defined by 
$$T'(f)=f(y)\cdot \phi $$
for any $f\in F'$ is almost L-weakly compact. If not, 
as the singleton $\{g\}$ is  a weakly compact subset of $X'$,
 and $T'(g)=g(y)\cdot \phi=\phi$, the singleton $\{ \phi \}$ is an 
 L-weakly compact subset of $E'$.  Since disjoint sequence
  $(\phi_{n})\subset sol(\{ \phi \} )$, we have $\norm{\phi_{n}}\rightarrow 0$, which is a contradiction.
\end{proof}

%almost L-weakly compact is semi-compact;

There also exist operators which are almost L-weakly compact but not semi-compact. 
For instance, the identity operator $Id_{\ell_{1}}:\ell_{1}\rightarrow \ell_{1}$ is almost
 L-weakly compact since $\ell_{1}$ has the positive Schur property. But it is not 
 semi-compact. If not, as $\ell_{1}$ is discrete with order continuous norm, 
 $Id_{\ell_{1}}$ is compact, which is impossible. 

Next, denote $T:E\rightarrow F$ as a continuous operator ,  we investigate  
the conditions under which each almost L-weakly compact operator $T$ is  semi-compact. 

Based on Theorem 4 of \cite{EAS:19}, we know that if $E'$ has an order continuous norm, 
then each positive almost L-weakly compact operator $T$ is M-weakly compact, hence 
semi-compact. Now, we claim that if $E$ is reflexive then each almost L-weakly compact 
operator $T$ is semi-compact. In fact, if $E$ is reflexive, then $B_{E}$ is a relatively weakly 
compact subset of $E$. As $T$ is almost L-weakly compact, $T(B_{E})$ is an L-weakly compact subset of $F$. 
By Proposition 3.6.2 of \cite{MN:91}, for every $\varepsilon >0$, there exists 
a vector $u\in F_{+}^{a}\subset F_{+}$ such that $T(B_{E})\subset [-u, u]+\varepsilon B_{F}$. So $T$ is semi-compact.

The following Theorem gives the conditions under which each positive almost 
L-weakly compact operator $T$ from $E$ to $E$  is semi-compact.

\begin{theorem}\label{al-sc}
Let $E$ be a Banach lattice with an order continuous norm. Then the following assertions are equivalent:
\begin{itemize}
\item[(1)] Each positive almost L-weakly compact operator $T$ from $E$ to $E$ is semi-compact.
 \item[(2)] The norm of $E'$ is order continuous.
 \end{itemize}
\end{theorem}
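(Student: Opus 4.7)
The plan is to split the equivalence. For $(2)\Rightarrow(1)$, no additional work is required: the paragraph preceding the theorem already records, as a consequence of Theorem~4 of \cite{EAS:19}, that whenever $E'$ has order continuous norm every positive almost L-weakly compact operator is M-weakly compact, and hence semi-compact. The order continuity of the norm on $E$ is not even used in this direction.

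For $(1)\Rightarrow(2)$ I would argue by contrapositive: assuming the norm of $E'$ is not order continuous, I produce a positive almost L-weakly compact operator $T:E\to E$ which is not semi-compact. The input is the same as in Theorems~\ref{sc-al} and \ref{sc-am}: by Theorem~4.14 of \cite{AB:06} there exist $\phi\in E'_+$ and a disjoint $(\phi_n)\subset[-\phi,\phi]$ with $\norm{\phi_n}\not\to 0$; passing to positive parts and to a subsequence I may assume $\phi_n\in[0,\phi]$ and $\norm{\phi_n}\geq\delta>0$. For each $n$, pick $x_n\in B_E\cap E_+$ with $\phi_n(x_n)\geq\delta$, and set
\[
T(x)=\sum_{n=1}^{\infty}\phi_n(x)\,x_n,\qquad x\in E.
\]
Disjointness of the $\phi_n$ inside $[0,\phi]$ gives $\sum_{n=1}^{N}\phi_n\leq\phi$ for every $N$, whence $\sum_n\abs{\phi_n(x)}\norm{x_n}\leq\phi(\abs{x})\leq\norm{\phi}\norm{x}$, so $T$ is a well-defined positive bounded operator. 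Moreover $T$ factors as $T_2\circ T_1$ with $T_1:E\to\ell_1$, $T_1(x)=(\phi_n(x))_n$, and $T_2:\ell_1\to E$, $T_2((\lambda_n))=\sum_n\lambda_n x_n$; since $\ell_1$ has the Schur property, $T$ sends weakly null sequences to norm null sequences, and the order continuity of $E$ gives $T(E)\subset E=E^a$, so \cite[Proposition 1]{EAS:19} yields that $T$ is almost L-weakly compact.

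The crux is to check $T$ is not semi-compact. Suppose it were, and pick $\varepsilon=\delta^{2}/(2\norm{\phi})$; then some $u\in E_+$ satisfies $T(B_E)\subset[-u,u]+\varepsilon B_E$, and each $T(x_k)$ decomposes as $v_k+r_k$ with $\abs{v_k}\leq u$ and $\norm{r_k}\leq\varepsilon$. Evaluating the positive functional $\phi_k$ yields, on the one hand,
\[
\phi_k(T(x_k))=\sum_n\phi_n(x_k)\,\phi_k(x_n)\geq\phi_k(x_k)^{2}\geq\delta^{2},
\]
as every term is non-negative, and on the other hand
\[
\phi_k(T(x_k))=\phi_k(v_k)+\phi_k(r_k)\leq\phi_k(u)+\norm{\phi}\varepsilon.
\]
The main obstacle is to extract a contradiction, and the crucial ingredient is $\sum_n\phi_n(u)\leq\phi(u)<\infty$, again from the disjointness inside $[0,\phi]$, which forces $\phi_k(u)\to 0$. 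Letting $k\to\infty$ produces $\delta^{2}\leq\norm{\phi}\varepsilon=\delta^{2}/2$, the desired contradiction; hence $T$ is not semi-compact and the contrapositive is complete.
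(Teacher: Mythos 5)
Your proposal is correct, and the direction $(2)\Rightarrow(1)$ coincides with the paper's (both just cite Theorem~4 of \cite{EAS:19}). For $(1)\Rightarrow(2)$, however, you take a genuinely different route. The paper starts from Theorem~116.1 of \cite{Zaanen:83} to get a positive disjoint normalized sequence $(u_n)$ in $B_E$ that is \emph{not} weakly null, then invokes Theorem~116.3 of \cite{Zaanen:83} to produce the components $\phi_n$ of $\phi$ in the carriers $C_{u_n}$, which satisfy the exact biorthogonality $\phi_n(u_m)=\delta_{nm}\phi(u_n)$; the resulting operator $T(x)=\sum_n\frac{\phi_n(x)}{\phi(u_n)}u_n$ then fixes each $u_n$, and non-semi-compactness follows because an almost order bounded disjoint sequence must converge weakly to zero. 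You instead apply Theorem~4.14 of \cite{AB:06} directly to $E'$ to get the order bounded disjoint sequence $(\phi_n)$, choose near-norming positive vectors $x_n$ by hand, and accept that the biorthogonality fails: $\phi_k(x_n)$ need not vanish for $n\ne k$. You compensate with the positivity of all terms (so $\phi_k(T(x_k))\ge\phi_k(x_k)^2\ge\delta^2$) and the quantitative observation that $\sum_n\phi_n(u)\le\phi(u)<\infty$ forces $\phi_k(u)\to0$, which kills the almost-order-bounded decomposition for a well-chosen $\varepsilon$. This buys you independence from the carrier/component machinery of \cite{Zaanen:83} and from the weak-convergence argument the paper uses at the end (essentially its Lemma~\ref{dn} in disguise), at the cost of a slightly more computational contradiction. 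The factorization through $\ell_1$ and the use of the Schur property to verify almost L-weak compactness are identical in both arguments. One cosmetic point: since $\norm{\phi_n}$ is a supremum that need not be attained, you should pick $x_n\in B_E\cap E_+$ with $\phi_n(x_n)\ge\delta/2$ (say) rather than $\ge\delta$, and propagate the constant; this does not affect the argument.
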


\begin{proof}
$(2)\Rightarrow (1)$ Follows from Theorem 4 of \cite{EAS:19}.

$(1)\Rightarrow (2)$ Assume by way of contradiction that the norm of $E'$ is not order continuous.  
To finish the proof, we need to construct a positive almost L-weakly compact operator 
$T: E\rightarrow E$ which is not semi-compact.

Since the norm of $E'$ is not order continuous, it follows from Theorem 116.1
 of \cite{Zaanen:83} that there exists a norm bounded disjoint sequence $(u_{n})$
  of positive elements in $E$ which does not  weakly convergence to zero.  Without loss of generality, 
  we may assume that $\norm{u_{n}}\le 1$ for any $n$. And there exist $\varepsilon >0$ and $0\le \phi\in E'$ 
  such that $\phi(u_{n})>\varepsilon$ for all $n$.  Then by Theorem 116.3 of \cite{Zaanen:83}, 
  we know that the components $\phi_{n}$ of $\phi$ in the carriers $C_{u_{n}}$ form an order
   bounded disjoint sequence in $(E')_{+}$ such that 
 
\centerline{$\phi_{n}(u_{n})=\phi(u_{n})$\ for\ all\ $n$\ and \ $\phi_{n}(u_{m})=0$\ if \ $n\ne m.\qquad (*)$} 

Define the positive operator $S_{1}:E\rightarrow \ell_{1}$ as follows:
$$S_{1}(x)=\left(\frac{\phi_{n}(x)}{\phi(u_{n})}\right)_{n=1}^{\infty}$$
for all $x\in E$. Since 

$$\sum_{n=1}^{\infty}\bigabs{\frac{\phi_{n}(x)}{\phi(u_{n})}}\le \frac{1}{\varepsilon}\sum_{n=1}^{\infty}\phi_{n}(\abs{x})\le  \frac{1}{\varepsilon}\phi(\abs{x})$$
holds for all $x\in E$, the operator $S_{1}$ is well defined and it is also easy to 
see that $S_{1}$ is a positive operator.  

 Now define the operator $S_{2}: \ell_{1}\rightarrow E$ as follows:
$$S_{2}(\lambda_{n})=\sum_{{n=1}}^{\infty}\lambda_{n}u_{n}$$
for all $(\lambda_{n})\in \ell_{1}$.
As $\sum_{n=1}^{\infty}\norm{\lambda_{n}u_{n}}\le \sum_{n=1}^{\infty}\abs{\lambda_{n}}<\infty$, $S_{2}$ 
is well defined and is positive.

Next, we consider the composed operator $T=S_{2}\circ S_{1}:E\rightarrow \ell_{1}\rightarrow E$ defined by
$$T(x)=\sum_{n=1}^{\infty}\frac{\phi_{n}(x)}{\phi(u_{n})}u_{n}$$
for all $x\in E$. Now we claim $T$ is an almost L-weakly compact operator.  Since $E$ has an order 
continuous norm, $E=E^{a}$. It suffices to show that $T$ satisfies the condition (b) of Proposition 1 
of \cite{EAS:19}.  Let $x_{n}\xrightarrow{w} 0$ in $E$ and $(f_{n})$ be a disjoint sequence in $B_{E'}$. 
It is obvious that $S_{1}(x_{n})$ is  a weakly null sequence in $\ell_{1}$.  As $\ell_{1}$ has the Schur 
property, $\bignorm{S_{1}x_{n}}\rightarrow 0$. Hence $\norm{T(x_{n})}=\bignorm{S_{2}(S_{1}(x_{n}))}\rightarrow 0$. 
Now following from the inequality
$$\abs{f_{n}(T(x_{n}))}\le \norm{f_{n}}\bignorm{T(x_{n})}\le \bignorm{Tx_{n}},$$
we obtain that $T$ is an almost L-weakly compact operator.

But $T$ is not a semi-compact operator. In fact, note that $\norm{u_{n}}\le 1$ and $T(u_{n})=u_{n}$ for all $n$
 following from $(*)$.  So, if $T$ is semi-compact, then $T(B_{E})$ is almost order bounded in $E$. Hence, 
 $(u_{n})\subset T(B_{E})$ is also almost order bounded. So, $u_{n}\rightarrow 0$ in $\sigma(E,E')$, 
 which is a contradiction.  
\end{proof}

To investigate the necessary and sufficient conditions under which each almost L-weakly  
compact operator $T:E\rightarrow F$ is semi-compact, we first give the following useful Lemma.

\begin{lemma}\label{dn}
Let $E$ be a Banach lattice with an order continuous norm. If $(u_{n})$ is a norm bounded disjoint 
sequence of $E$ such that the set $\{u_{n}\}$ is almost order bounded in $E$, then $(u_{n})$ 
converges to zero in norm.
\end{lemma}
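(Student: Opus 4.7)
The plan is to exploit the standard splitting $|u_n| = (|u_n|\wedge v) + (|u_n|-v)^+$ together with the fact that in a Banach lattice with order continuous norm, every order bounded disjoint sequence is norm null (this is one of the standard characterisations of order continuity, e.g.\ Theorem 4.14 of \cite{AB:06}, which has already been used in the paper).

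First I would unpack the hypothesis that $\{u_n\}$ is almost order bounded: given $\varepsilon>0$, there exists $v\in E_+$ with $\bignorm{(\abs{u_n}-v)^+}\le\varepsilon$ for every $n$. (One passes to $\abs{u_n}$ freely since $\norm{u_n}=\bignorm{\abs{u_n}}$, and the solid hull of an almost order bounded set is almost order bounded.) Then I would write
\[
\abs{u_n} \;=\; \abs{u_n}\wedge v \;+\; (\abs{u_n}-v)^+,
\]
so that $\norm{u_n}\le \bignorm{\abs{u_n}\wedge v} + \bignorm{(\abs{u_n}-v)^+}\le \bignorm{\abs{u_n}\wedge v}+\varepsilon$.

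Next I would observe that the sequence $\bigl(\abs{u_n}\wedge v\bigr)$ is disjoint (disjointness of $(u_n)$ is inherited by $(\abs{u_n})$, and infimum with a fixed element preserves disjointness) and order bounded by $v$. Order continuity of the norm of $E$ then forces $\bignorm{\abs{u_n}\wedge v}\to 0$. Consequently $\limsup_n\norm{u_n}\le\varepsilon$, and since $\varepsilon>0$ was arbitrary, $\norm{u_n}\to 0$.

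There is no real obstacle here; the only point to be careful about is the invocation of the classical fact that order continuity of the norm is equivalent to every order bounded disjoint sequence being norm null, and the elementary lattice identity used in the splitting. Everything else is routine manipulation.
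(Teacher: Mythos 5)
Your argument is correct and is essentially identical to the paper's proof: both use the splitting $\abs{u_n}=(\abs{u_n}-v)^{+}+\abs{u_n}\wedge v$, control the first term by the almost order boundedness hypothesis, and kill the second term by applying the characterisation of order continuity (Theorem 4.14 of \cite{AB:06}) to the order bounded disjoint sequence $(\abs{u_n}\wedge v)$. No further comment is needed.
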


\begin{proof}
Since $A: =\{u_{n}: n\in \mathcal{N}\}$ is almost order bounded, there exists some 
$u\in E_{+}$ such that 

\centerline{$\bignorm{(\abs{u_{n}}-u)^{+}}\le \varepsilon$\ for\ all \ $n$.$$}

 On the other hand, since $(\abs{u_{n}}\wedge u)$ is an order bounded disjoint sequence
  in $E$ and the norm of $E$ is order continuous, following from Theorem 4.14 of \cite {AB:06}, 
  $(\abs{u_{n}}\wedge u)$ converges to zero in norm . Hence, there exists some $n_{0}$ such that 
 
\centerline{$\bignorm{\abs{u_{n}}\wedge u}\le \varepsilon$  \ for \ all \ $n\ge n_{0}$.}

Now, following from the equality $\abs{u_{n}}=(\abs{u_{n}}-u)^{+}+\abs{u_{n}}\wedge u$, we obtain that 

$$\norm{u_{n}}\le \bignorm{(\abs{u_{n}}-u)^{+}}+\bignorm{\abs{u_{n}}\wedge u} \le 2\varepsilon$$
holds for all $n\ge n_{0}$. So, $u_{n}\rightarrow 0$ in norm.
\end{proof}

\begin{theorem}\label{al-sc1}
Let $E$ and $F$ be two Banach lattices such that the norm of $F$ is order continuous. 
Then the following assertions are equivalent:
\begin{itemize}
\item[(1)] Each positive almost L-weakly compact operator $T:E\rightarrow F$ is semi-compact;

\item[(2)]One of the following conditions is valid:
\begin{itemize}
\item[(a)] The norm of $E'$ is order continuous;
\item[(b)] $E$ or $F$ is finite dimensional.
\end{itemize}
\end{itemize}
\end{theorem}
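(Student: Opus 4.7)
My plan splits along the two implications. For $(2)\Rightarrow(1)$ I would handle the three sub-cases of hypothesis (2) directly. If the norm of $E'$ is order continuous, then Theorem 4 of \cite{EAS:19} already gives that every positive almost L-weakly compact $T:E\rightarrow F$ is M-weakly compact and hence semi-compact. If $E$ is finite dimensional, then $B_E$ is compact, so $T(B_E)$ is relatively compact in $F$; any relatively compact subset of a Banach lattice is almost order bounded via the inequality $(\abs{x}-\abs{x_i})^+\le\abs{x-x_i}$ applied to a finite $\varepsilon$-net, so $T$ is semi-compact. If $F$ is finite dimensional, then $T(B_E)$ is a bounded subset of a finite-dimensional Banach lattice, which has a strong order unit, so $T(B_E)$ is automatically order bounded.

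For $(1)\Rightarrow(2)$ I argue by contraposition, closely mirroring the construction in Theorem \ref{al-sc}. Suppose the norm of $E'$ is not order continuous and both $E$ and $F$ are infinite dimensional. By Theorem 116.1 of \cite{Zaanen:83} there is a disjoint positive sequence $(u_n)$ in $E$ with $\norm{u_n}\le 1$ that does not weakly converge to zero, together with $\varepsilon>0$ and $0\le\phi\in E'$ satisfying $\phi(u_n)>\varepsilon$; Theorem 116.3 of \cite{Zaanen:83} then yields the disjoint components $\phi_n\in(E')_+$ satisfying the biorthogonality relation $(*)$. Define $S_1:E\rightarrow\ell_{1}$ by $S_1(x)=(\phi_n(x)/\phi(u_n))_n$ exactly as in Theorem \ref{al-sc}. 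Since $F$ is infinite dimensional, it admits a disjoint positive sequence of unit vectors $(y_n)$; set $S_2:\ell_{1}\rightarrow F$ by $S_2((\lambda_n))=\sum_n\lambda_n y_n$, which is well-defined (the series converges absolutely because $\norm{y_n}=1$) and positive. Let $T=S_2\circ S_1:E\rightarrow F$. By $(*)$ a direct computation gives $S_1(u_n)=e_n$ and therefore $T(u_n)=y_n$.

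It then remains to check that $T$ is almost L-weakly compact but not semi-compact. For the first, since $F=F^a$, by Proposition 1 of \cite{EAS:19} it suffices to show $f_n(T(x_n))\rightarrow 0$ for every weakly null $(x_n)\subset E$ and every disjoint $(f_n)\subset B_{F'}$; but $(S_1 x_n)$ is weakly null in $\ell_{1}$, so the Schur property of $\ell_{1}$ forces $\norm{S_1 x_n}\rightarrow 0$, whence $\norm{T x_n}\rightarrow 0$, which suffices. For the second, if $T$ were semi-compact then $T(B_E)\supset\{y_n\}$ would be almost order bounded in $F$; but $(y_n)$ is disjoint and norm bounded in the order continuous space $F$, so Lemma \ref{dn} forces $\norm{y_n}\rightarrow 0$, contradicting $\norm{y_n}=1$. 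The main obstacle is the existence of the disjoint positive unit sequence $(y_n)$ in $F$, a standard consequence of infinite-dimensionality of a Banach lattice; this is precisely where the failure of condition (b) enters, and without it the whole construction collapses (for finite-dimensional $F$, every bounded set is automatically order bounded).
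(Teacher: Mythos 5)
Your proposal is correct and follows essentially the same route as the paper: the forward direction uses the identical $T=S_2\circ S_1$ construction through $\ell_{1}$ built from the disjoint sequences $(u_n)$, $(\phi_n)$ and a disjoint positive normalized sequence in $F$, with the Schur property giving almost L-weak compactness and Lemma \ref{dn} giving the failure of semi-compactness. The only (harmless) deviation is in the finite-dimensional cases of $(2b)\Rightarrow(1)$, where you argue directly via relative compactness and order boundedness instead of invoking M-weak and L-weak compactness as the paper does.
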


\begin{proof}
$(1)\Rightarrow (2)$
Assume $E$ and $F$ are both infinite dimensional. We have to show that  the norm 
of $E'$ is  order continuous.  If not, to finish the proof, we need to construct a positive almost
 L-weakly compact operator $T: E\rightarrow F$ which is not semi-compact.

Since the norm of $E'$ is not order continuous, similarly with the proof of Theorem \ref{al-sc}, 
we define the positive operator $S_{1}:E\rightarrow \ell_{1}$ as follows:
$$S_{1}(x)=\left(\frac{\phi_{n}(x)}{\phi(u_{n})}\right)_{n=1}^{\infty}$$
for all $x\in E$. And  the operator $S_{1}$ is well defined. 

On the other hand, since $F$ is infinite dimensional, by Lemma 2.3 of \cite{AEH:11}, there exists a 
disjoint sequence $(y_{n})\subset (B_{F})_{+}$  such that $\norm{y_{n}}=1$. Now define the operator $S_{3}: \ell_{1}\rightarrow F$ as follows:
$$S_{3}(\lambda_{n})=\sum_{{n=1}}^{\infty}\lambda_{n}y_{n}$$
for all $(\lambda_{n})\in \ell_{1}$.
As $\sum_{n=1}^{\infty}\norm{\lambda_{n}y_{n}}\le \sum_{n=1}^{\infty}\abs{\lambda_{n}}<\infty$, $S_{3}$ is 
well defined and is positive.

Next, we consider the composed operator $T=S_{3}\circ S_{1}:E\rightarrow \ell_{1}\rightarrow F$ defined by
$$T(x)=\sum_{n=1}^{\infty}\frac{\phi_{n}(x)}{\phi(u_{n})}y_{n}$$
for all $x\in E$. Now we claim $T$ is an almost L-weakly compact operator.  Since $F$ has an order 
continuous norm, $F=F^{a}$. It suffices to show that $T$ satisfies the condition (b) of 
Proposition 1 of \cite{EAS:19}.  Let $x_{n} \xrightarrow{w} 0$ in $E$ and $(f_{n})$ be a 
disjoint sequence in $B_{F'}$. It is obvious that $S_{1}(x_{n})$ is  a weakly null sequence 
in $\ell_{1}$.  As $\ell_{1}$ has the Schur property, $\bignorm{S_{1}x_{n}}\rightarrow 0$. 
Hence $\norm{T(x_{n})}=\bignorm{S_{3}(S_{1}(x_{n}))}\rightarrow 0$. Now following from 
the inequality
$$\abs{f_{n}(T(x_{n}))}\le \norm{f_{n}}\bignorm{T(x_{n})}\le \bignorm{Tx_{n}},$$
we obtain that $T$ is an almost L-weakly compact operator.

But $T$ is not a semi-compact operator. In fact, note that $\norm{u_{n}}\le1$ and $T(u_{n})=y_{n}$ 
for all $n$ following from $(*)$.  So, if $T$ is semi-compact, then $T(B_{E})$ is almost order bounded in $F$. 
Hence, $(y_{n})\subset T(B_{E})$ is almost order bounded. By Lemma \ref{dn}, $y_{n}\rightarrow 0$ 
in norm, which is a contradiction.  
 
$(2a)\Rightarrow (1)$ Follows from Theorem 4 of \cite{EAS:19}.

$(2b)\Rightarrow (1)$ Let $T: E\rightarrow F$ be a positive operator. If $E$ is finite dimensional, $T$ is M-weakly compact. 
Also, if $F$ is finite dimensional, $T$ is L-weakly compact.  Hence, $T$ is semi-compact.
\end{proof}

There exist operators which are almost M-weakly compact but not semi-compact. For instance, the 
identity operator $Id_{c_{0}}:c_{0}\rightarrow c_{0}$ is almost M-weakly compact since
 $(c_{0})'=\ell_{1}$ has the Positive Schur property. But it is not semi-compact. If not, as $c_{0}$ is
  discrete with order continuous norm, $Id_{c_{0}}$ is compact, which is impossible. Following by 
  Corollary 5 of \cite{EAS:19}, we have the following assertion.

\begin{theorem}
Let $E$ and $F$ be two Banach lattices. And let $T:E\rightarrow F$ be an order bounded almost 
M-weakly compact operator. If $F''$ has order continuous norm, then $T$ is  semi-compact.
\end{theorem}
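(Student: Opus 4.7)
The plan is to dualize, apply a result already stated in the paper, and then dualize back. First, since $T$ is almost M-weakly compact, Theorem 2.5(1) of \cite{BLM:18} yields that the adjoint $T': F' \rightarrow E'$ is almost L-weakly compact, and the order boundedness of $T$ carries over to order boundedness of $T'$.

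The key step is to apply Corollary 5 of \cite{EAS:19} — the result the authors explicitly cite in the sentence preceding the theorem — to the order bounded almost L-weakly compact operator $T': F' \rightarrow E'$. The bidual of the domain of $T'$ is $F''$, which by hypothesis has order continuous norm, and this is precisely the setting in which that corollary upgrades ``almost L-weakly compact'' to ``M-weakly compact''. Hence $T'$ is M-weakly compact.

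To close the argument, I invoke the classical duality between L- and M-weak compactness (Theorem 5.64 of \cite{AB:06}): $T'$ being M-weakly compact is equivalent to $T$ being L-weakly compact. Since $T(B_E)$ is then an L-weakly compact subset of $F$, Proposition 3.6.2 of \cite{MN:91} (already used in the proof of Theorem \ref{al-sc1}) shows that for each $\varepsilon > 0$ there exists $u \in F_+^{a}$ with $T(B_E) \subset [-u,u] + \varepsilon B_F$, which is the definition of semi-compactness. The main obstacle is simply lining up the hypotheses of Corollary 5 of \cite{EAS:19} with the data for $T'$; the order boundedness assumption on $T$ is needed exactly so that $T'$ is order bounded and that corollary becomes applicable to $T'$ rather than only to positive operators.
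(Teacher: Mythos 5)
Your proposal is correct in substance and is essentially the route the paper intends: the paper's ``proof'' is the one-line citation of Corollary 5 of \cite{EAS:19}, which says that an order bounded almost M-weakly compact operator $T\colon E\rightarrow F$ is L-weakly compact whenever $F''$ has order continuous norm, after which semi-compactness follows because L-weakly compact operators are semi-compact; what you have written out is precisely the duality argument that proves that corollary. Two slips should be fixed. First, the space you need is the \emph{dual} of the domain of $T'$, namely $(F')'=F''$; the \emph{bidual} of the domain of $T'$ is $F'''$, so your sentence miscomputes, even though the object you actually invoke ($F''$, order continuous by hypothesis) is the correct one. Second, the statement you apply to $T'$ is not Corollary 5 of \cite{EAS:19} but the result it is deduced from (Theorem 4 of \cite{EAS:19} in the paper's numbering: if the dual of the domain has order continuous norm, then every positive almost L-weakly compact operator is M-weakly compact); Corollary 5 is the dual statement about almost M-weakly compact operators and applies directly to $T$, delivering L-weak compactness of $T$ with no detour through the adjoint. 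Finally, be aware that the paper records Theorem 4 of \cite{EAS:19} only for \emph{positive} operators, whereas your $T'$ is merely order bounded; your closing remark acknowledges this, but as stated it leans on the order bounded version of that theorem (which is what \cite{EAS:19} actually provides and what Corollary 5 packages) rather than on anything proved or quoted in the present paper. If you only had the positive case available, you would need an additional decomposition or domination argument for $T'=(T')^{+}-(T')^{-}$ in the Dedekind complete lattice $E'$, and that is the one genuine gap to close if you insist on rebuilding the corollary from scratch.
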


By Theorem 1 of \cite{EAS:19}, we know that every Dunford-Pettis operator $T: X\rightarrow E$ is almost 
L-weakly compact if and only if $E$ has an order continuous norm.  Next, we investigate the conditions 
under which each almost L-weakly compact operator is Dunford-Pettis. 

There exist operators which are almost L-weakly compact but not Dunford-Pettis. For example, the identity 
operator $Id_{L_{1}[0,1]}:L_{1}[0,1]\rightarrow L_{1}[0,1]$ is almost L-weakly compact as $L_{1}[0,1]$ has 
the positive Schur property. But it is not Dunford-Pettis as $L_{1}[0,1]$ does not have the Schur 
property. However, we have the following result.

\begin{proposition}
Let $X$ be a Banach space and $E$ be a Banach lattice. If $E$ has weakly sequentially continuous lattice operations, then each almost L-weakly compact operator $T:X\rightarrow E$ is Dunford-Pettis. 
\end{proposition}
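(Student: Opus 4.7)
\noindent\emph{Proof proposal.} The strategy is to verify that $T$ maps weakly null sequences in $X$ to norm null sequences in $E$. Let $(x_n)\subset X$ be weakly null. The continuity of $T$ gives $T(x_n)\xrightarrow{w}0$ in $E$; the hypothesis that $E$ has weakly sequentially continuous lattice operations, combined with the identity $a\wedge u=\tfrac{1}{2}(a+u-\abs{a-u})$, then yields $\abs{T(x_n)}\xrightarrow{w}0$ and, for any fixed $u\in E_{+}$, $\abs{T(x_n)}\wedge u\xrightarrow{w}0$. On the other hand, since $\{x_n\}$ is relatively weakly compact in $X$, the assumption that $T$ is almost L-weakly compact implies that $\{T(x_n)\}$ is L-weakly compact in $E$.

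I would then apply the Meyer-Nieberg characterization of L-weakly compact sets (Proposition 3.6.2 of \cite{MN:91}): for every $\varepsilon>0$ there exists $u\in E^{a}_{+}$ such that $\bignorm{(\abs{T(x_n)}-u)^{+}}\le\varepsilon$ for all $n$. From the decomposition $\abs{T(x_n)}=\abs{T(x_n)}\wedge u+(\abs{T(x_n)}-u)^{+}$ one obtains
$$\norm{T(x_n)}\le\bignorm{\abs{T(x_n)}\wedge u}+\varepsilon.$$
Since $u\in E^{a}$ and $E^{a}$ is an ideal of $E$, the sequence $(\abs{T(x_n)}\wedge u)$ lies in the order interval $[0,u]$ of the Banach lattice $E^{a}$, which has an order continuous norm.

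The decisive step is to show that $\bignorm{\abs{T(x_n)}\wedge u}\to 0$. The sequence $(\abs{T(x_n)}\wedge u)$ is positive, order bounded by $u$, and weakly null in $E^{a}$. I would then invoke the standard fact that in a Banach lattice with order continuous norm every positive, order bounded, weakly null sequence converges to zero in norm; together with the inequality above and the arbitrariness of $\varepsilon$, this yields $\norm{T(x_n)}\to 0$ and completes the proof. The main obstacle I anticipate is establishing that last standard fact cleanly: it should follow from a Kadec-Pelczynski-style disjointification combined with Theorem 4.14 of \cite{AB:06} (order bounded disjoint sequences in an order continuous Banach lattice converge to zero in norm) — if $\bignorm{\abs{T(x_n)}\wedge u}$ did not tend to zero, one would pass to a subsequence and extract a positive disjoint sequence in $[0,u]$ with norms bounded below, contradicting order continuity of the norm on $E^{a}$.
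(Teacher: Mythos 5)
Your proposal is correct in outline but takes a genuinely different route from the paper, and its final step is the crux. The paper's proof is a two-line application of Corollary 2.6 of Dodds--Fremlin \cite{DF:79}: to conclude $\norm{T(x_n)}\rightarrow 0$ it suffices to check (i) $\abs{T(x_n)}\xrightarrow{w}0$, which is precisely the weak sequential continuity of the lattice operations, and (ii) $f_{n}(T(x_n))\rightarrow 0$ for every positive disjoint sequence $(f_n)$ in $B_{E'}$, which is precisely the sequential characterization of almost L-weak compactness. No decomposition of $\abs{T(x_n)}$ is needed. You instead work from the set-theoretic definition (images of relatively weakly compact sets are L-weakly compact), invoke the Meyer--Nieberg almost-order-boundedness characterization, and split $\abs{T(x_n)}=\abs{T(x_n)}\wedge u+(\abs{T(x_n)}-u)^{+}$, thereby reducing everything to the claim that a positive, order bounded, weakly null sequence in a Banach lattice with order continuous norm is norm null.

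That claim is true, and your reduction to it is sound (including the verification, via $a\wedge b=\tfrac12(a+b-\abs{a-b})$, that $\abs{T(x_n)}\wedge u\xrightarrow{w}0$), but it is the one place where your argument remains a sketch, and the Kadec--Pelczynski route you propose is the hardest way to close it: the disjointification procedure only produces almost disjoint sequences, and turning ``norms bounded below'' into an exactly disjoint sequence in $[0,u]$ takes real work. Two cleaner finishes: (a) since the norm of $E^{a}$ is order continuous, the interval $[0,u]$ with $u\in E^{a}_{+}$ is itself L-weakly compact (every disjoint sequence in its solid hull $[-u,u]$ is norm null), so every disjoint sequence in $B_{E'}$ converges to zero uniformly on $[0,u]$; feeding this together with $\abs{T(x_n)}\wedge u\xrightarrow{w}0$ into the same Dodds--Fremlin corollary yields $\bignorm{\abs{T(x_n)}\wedge u}\rightarrow 0$ --- at which point you have rederived the paper's argument with extra steps; or (b) represent the band generated by $u$ as an ideal of some $L_{1}(\mu)$ with $u$ the constant-one function and argue via convergence in measure. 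Either way the proof goes through, so this is a correct but longer argument whose key lemma should be properly substantiated rather than cited as standard.
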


\begin{proof}
Let $(x_{n})$ be a weakly null sequence, we have to show that $T(x_{n})\rightarrow 0$ in norm. 
Based on  Corollary 2.6 of \cite{DF:79}, it suffices to show $\abs{T(x_{n})}\xrightarrow{w} 0$ in 
$E$ and $f_{n}(T(x_{n}))\rightarrow 0$ for each positive disjoint sequence $(f_{n})$ in $B_{E'}$. 

As $x_{n}\xrightarrow{w} 0$, then $T(x_{n})\xrightarrow{w} 0$. Since the lattice operations of $E$ are 
weakly sequentially continuous,  $\abs{T(x_{n})}\xrightarrow{w} 0$ in $E$. On the other hand, as $x_{n}\xrightarrow{w} 0$ in $X$ and $T$ is an 
almost L-weakly compact operator, for each positive disjoint sequence $(f_{n})$ in
 $B_{E'}$, $f_{n}(T(x_{n}))\rightarrow 0$.  Hence, we get that $T$ is Dunford-Pettis.
\end{proof}

At last, we give a conclusion about the relationships between almost L-weakly compact operators and 
almost Dunford-Pettis operators.
K. Bouras et al. show that if $F$ has an order continuous norm,  each positive almost Dunford-Pettis
 operator $T:E\rightarrow F$ is almost L-weakly compact (\cite[Proposition 2.4]{BLM:18}).  We show 
 that the condition of ``$F$ has an order continuous norm'' is also necessary.

\begin{proposition}\label{adp-alwc}
Let $E$ and $F$ be two nonzero Banach lattices. Then the following statements are equivalent:
\begin{itemize}
\item[(1)] Each positive almost Dunford-Pettis operator $T:E\rightarrow F$ is almost L-weakly compact.
\item[(2)] The norm of $F$ is order continuous.
\end{itemize}
\end{proposition}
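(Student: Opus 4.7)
The direction $(2)\Rightarrow(1)$ is Proposition 2.4 of \cite{BLM:18}, so the substantive task is $(1)\Rightarrow(2)$. My plan is to reuse, essentially verbatim, the rank-one construction from the proof of Theorem \ref{sc-al}: a positive rank-one operator is automatically Dunford--Pettis (and hence almost Dunford--Pettis), while the singleton argument shows it fails to be almost L-weakly compact as soon as the norm of $F$ is not order continuous.

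Proceeding by contraposition, I would assume the norm of $F$ is not order continuous. By Theorem 4.14 of \cite{AB:06} there exist $y\in F_{+}$ and a disjoint sequence $(y_{n})\subset [-y,y]$ with $\norm{y_{n}}\nrightarrow 0$. Since $E$ is a nonzero Banach lattice, a standard Hahn--Banach argument produces a positive functional $\phi\in (E')_{+}$ and an element $u\in E_{+}$ with $\phi(u)=1$. I would then define $T:E\to F$ by $T(x)=\phi(x)\cdot y$, which is clearly positive and of rank one.

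To see that $T$ is almost Dunford--Pettis, observe that for any weakly null sequence $(x_{n})$ in $E$ one has $\phi(x_{n})\to 0$, and hence $\norm{T(x_{n})}=\abs{\phi(x_{n})}\norm{y}\to 0$; in particular, disjoint weakly null sequences are mapped to norm null sequences. To see that $T$ is not almost L-weakly compact, note that $\{u\}$ is weakly compact and $T(\{u\})=\{y\}$: if $T$ were almost L-weakly compact, then $\{y\}$ would be an L-weakly compact subset of $F$, so every disjoint sequence in the solid hull $[-y,y]$ of $\{y\}$ would be norm null, contradicting $\norm{y_{n}}\nrightarrow 0$.

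The main (and only) conceptual point is pattern recognition: the same rank-one operator used as a counterexample in Theorems \ref{sc-al} and \ref{sc-am} is automatically Dunford--Pettis, so the construction transfers without modification. There are no additional technical obstacles.
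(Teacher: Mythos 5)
Your proposal is correct and follows essentially the same route as the paper: the same rank-one operator $T(x)=\phi(x)\cdot y$ built from Theorem 4.14 of \cite{AB:06}, with the same singleton/solid-hull contradiction; the only cosmetic difference is that you verify the (almost) Dunford--Pettis property directly from $\phi(x_n)\to 0$, whereas the paper simply notes $T$ is compact because it has rank one.
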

\begin{proof}
$(2)\Rightarrow (1)$ Follows from Proposition 2.4 of \cite{BLM:18}.

$(1)\Rightarrow (2)$ Assume by way of contradiction that the norm of $F$ is not order continuous. 
we need to construct a positive operator which is almost Dunford-Pettis but not almost L-weakly compact.

Similarly with the proof of Theorem \ref{sc-al}. Since the norm of $F$ is not order continuous, by
 Theorem 4.14 of \cite{AB:06}, there exists a vector $y\in F_{+}$ and a disjoint sequence $(y_{n})\subset [-y,y]$ 
 such that $\norm{y_{n}}\nrightarrow 0$.  On the other hand, as $E$ is nonzero,  we may fix $u\in E_{+}$ and
  pick a $\phi\in (E')_{+}$ such that $\phi(u)=\norm{u}=1$ holds.

Now, we consider operator $T:E\rightarrow F$ defined by
$$T(x)=\phi(x)\cdot y$$
for each $x\in E$. Obviously, $T$ is a positive operator and is compact (its rank is one).  Hence, it is almost Dunford-Pettis. 
But it is not an almost L-weakly compact operator. If not, as the 
 singleton $\{u\}$ is  a weakly compact subset of $E$, and $T(u)=\phi(u)\cdot y=y$, the
  singleton $\{y\}$ is an L-weakly compact subset of $F$.  Since disjoint sequence 
  $(y_{n})\subset sol(\{y\})$, we have $\norm{y_{n}}\rightarrow 0$, which is a contradiction.

\end{proof}

%\noindent \textbf{Acknowledgement.} 

\end{document}